\documentclass[11pt]{amsart}

\usepackage{amsmath,amssymb,amsthm}
\usepackage{mathtools}
\usepackage{enumitem}
\usepackage{graphicx}
\usepackage{tikz}
\usepackage[margin=1in]{geometry}
\usepackage[hidelinks]{hyperref}

\theoremstyle{plain}
\newtheorem{theorem}{Theorem}[section]
\newtheorem{proposition}[theorem]{Proposition}
\newtheorem{lemma}[theorem]{Lemma}

\newtheorem{definition}[theorem]{Definition}
\newtheorem{example}[theorem]{Example}
\newtheorem{remark}[theorem]{Remark}

\newcommand{\Dow}{\mathrm{D}}
\newcommand{\DR}{\mathrm{DR}}

\newcommand{\Flag}{\mathrm{F}}

\newcommand{\R}{\mathbb{R}}

\title[Geometry-Induced Hodge Stars on Rips-Type Complexes]
      {Geometry-Induced Hodge Stars on Rips and Dowker--Rips Complexes}

\author{Jiahui Chen}
\address{Department of Mathematical Sciences, University of Arkansas, Fayetteville, AR, USA}
\email{jiahuic@uark.edu}

\author{Sebastian Wilcox}
\address{Department of Mathematical Sciences, University of Arkansas, Fayetteville, AR, USA}
\email{sjw032@uark.edu}

\date{\today}

\begin{document}

\begin{abstract}
    The Vietoris--Rips complex $\mathrm{VR}_\epsilon(X)$, the Dowker complex $\Dow_R(X,Y)$, and its flagified Dowker--Rips variant $\DR_R(X,Y)=\Flag(\Dow_R(X,Y))$ are simplicial complexes constructed from metric data or witness relations. They are useful in topological data analysis because they encode topology through combinatorial data derived from pairwise information, but at a fixed scale they retain little of the underlying geometry. Unlike alpha complexes or mesh-based discretizations, Rips-type complexes carry no canonical primal--dual cell structure, which is the ingredient used by the discrete exterior calculus Hodge star to encode metric information. We address this gap by equipping a Rips-type complex $K$ with diagonal geometry-induced Hodge stars represented by positive simplex weights $W_k=\operatorname{diag}\{w_k(\sigma):\sigma\in K_k\}$, which define weighted inner products on $k$-cochains. The resulting weighted discrete Hodge Laplacian $\Delta_k^W$ has kernel dimension equal to the $k$th Betti number of the underlying complex, while its nonzero spectrum is governed by the chosen geometric weights. The central issue is therefore not the existence of a weighted Laplacian, since any positive diagonal weights define one, but the design of weights that encode meaningful metric or witness geometry. We focus on two computable choices: simplex-volume weights, based on Euclidean simplex volumes, and soft witness weights, based on a Dowker-style support function $s_t(\sigma;Y)$ that quantifies higher-order witness support lost under flagification. We prove positivity, weighted self-adjointness, and Betti-number preservation for arbitrary positive diagonal weights, establish an asymptotic decay-rate characterization for soft witness support, and describe spectral descriptors derived from $\Delta_k^W$ for comparing geometry-aware Hodge spectra on Rips and Dowker--Rips complexes.
\end{abstract}

\maketitle

\section{Introduction}
\label{sec:intro}

Topological data analysis (TDA) studies the shape of data by replacing point clouds, metric spaces,
or relations with filtered simplicial complexes \cite{edelsbrunner2010computational}. Vietoris--Rips
complexes and Dowker complexes \cite{dowker1952homology} are standard vehicles for this passage
from data to topology, and Dowker duality has persistent and functorial formulations
\cite{chowdhury2018functorial}. The Dowker--Rips complex of Huber and Schnider
\cite{huber2025flagifying} is obtained by flagifying the Dowker complex, making it computationally
close to the Vietoris--Rips pipeline. These complexes encode topology through combinatorial data,
but in doing so they discard much of the underlying metric or witness geometry: once a simplex is
present, ordinary (co)homological computations treat it identically regardless of its size, shape,
or support in the data.

The classical object that couples geometry to (co)homology is the Hodge star. On a closed oriented
Riemannian manifold $(M,g)$, it is the metric-dependent operator
\[
\star : \Omega^k(M) \to \Omega^{n-k}(M),
\qquad
\alpha\wedge\star\beta = \langle\alpha,\beta\rangle_g\,\mathrm{vol}_g,
\]
which defines the codifferential and the Hodge--de~Rham Laplacian
$\Delta=d\delta+\delta d$ \cite{warner1983foundations,rosenberg1997laplacian}. The Hodge theorem
identifies harmonic forms with cohomology, so the exterior derivative carries topology while the
Hodge star carries metric geometry. Discrete Hodge Laplacians on graphs and simplicial complexes
inherit this separation \cite{lim2020hodge,horak2013spectra}. Related spectral tools include
normalized Hodge Laplacians and simplicial random walks \cite{schaub2018random}, Hodge-Laplacian
filters \cite{yang2022simplicialfilters,yang2023convolutional}, and weighted simplicial models
\cite{battiloro2023weighted}. Learning architectures also use Hodge-Laplacian structure
\cite{smirnov2021hodgenet,ebli2020simplicial}.

In discrete exterior calculus (DEC), the metric data is represented by a positive cochain inner
product, often a diagonal Hodge star
\[
W_k=\mathrm{diag}\{w_k(\sigma):\sigma\in K_k\},
\qquad
\langle\alpha,\beta\rangle_{k,W}=\alpha^TW_k\beta.
\]
For circumcentric meshes or alpha complexes, the canonical diagonal entries are primal--dual volume
ratios $(\star_k)_{\sigma\sigma}=|\star\sigma|/|\sigma|$
\cite{hirani2003dec,desbrun2005dec}. Rips and Dowker--Rips complexes have no canonical dual cells,
so this formula cannot be used literally. We instead regard the discrete Hodge star as a cochain
metric and choose positive diagonal weights from the data itself. This differs from weighted
persistent homology, where simplex weights alter the filtration
\cite{ren2018weighted,meng2019weighted}; here the real chain complex and its Betti numbers are fixed,
while the nonzero spectrum records geometry. It also differs from persistent Laplacians for
simplicial pairs \cite{memoli2020persistent}: the present implementation computes per-scale weighted
Hodge spectra, in the spirit of persistent spectral descriptors \cite{wang2020persistent}, rather
than a two-parameter persistent-pair Laplacian.

The main contribution of this paper is a framework for placing geometry-induced diagonal Hodge stars
on Rips-type and Dowker--Rips complexes, where no canonical primal--dual cell structure is
available. We study two concrete choices, simplex-volume weights and soft witness weights, with the
latter designed to quantify higher-order witness support lost under flagification. We also use the
self-witness specialization $Y=X$, in which the data points themselves serve as witnesses, to combine
Rips topology with Dowker-style witness geometry. For these weighted cochain metrics we prove the
basic Hodge-theoretic properties needed for the construction, including positivity, weighted
self-adjointness, and preservation of Betti numbers under positive diagonal weights. Finally, we
describe an implementation that constructs signed boundary matrices, weighted Hodge Laplacians,
spectra, heat traces, and entropy descriptors for Rips-type complexes up to a prescribed maximal
simplex dimension.

\section{Background: Dowker and Dowker--Rips Complexes}
\label{sec:background}

\begin{definition}[Dowker complex and witness set]
Let $X$ and $Y$ be finite sets, and let $R\subseteq X\times Y$ be a relation. The Dowker complex on
$X$ relative to $Y$, denoted $\Dow_R(X,Y)$, is the simplicial complex whose vertices are elements
of $X$, and where a finite subset $\sigma=\{x_0,\dots,x_k\}\subseteq X$ is a simplex if there
exists a common witness $y\in Y$ such that $(x_i,y)\in R$ for all $i=0,\dots,k$. Equivalently, the
witness set
\[
W_R(\sigma)=\{y\in Y : (x,y)\in R \text{ for all }x\in\sigma\}
\]
satisfies $\sigma\in\Dow_R(X,Y)$ if and only if $W_R(\sigma)\neq\varnothing$. In the metric
setting, if $X,Y\subseteq (Z,d)$ and $\epsilon\geq0$, we write
$R_\epsilon=\{(x,y)\in X\times Y:d(x,y)\leq\epsilon\}$ and denote the associated Dowker complex by
$\Dow_\epsilon(X,Y)$.
\end{definition}

\begin{definition}[Dowker--Rips complex]
The Dowker--Rips complex, introduced by Huber and Schnider \cite{huber2025flagifying}, is defined
as the flagification of the Dowker complex. For a relation $R\subseteq X\times Y$,
$\DR_R(X,Y)=\Flag(\Dow_R(X,Y))$; in the metric case,
\[
\DR_\epsilon(X,Y)
=
\Flag(\Dow_\epsilon(X,Y)).
\]
Equivalently, $\DR_\epsilon(X,Y)$ is the maximal flag complex whose one-skeleton agrees with the
one-skeleton of $\Dow_\epsilon(X,Y)$. Thus, two vertices $x_i,x_j\in X$ form an edge if
\[
\exists y\in Y
\quad\text{such that}\quad
d(x_i,y)\leq \epsilon
\quad\text{and}\quad
d(x_j,y)\leq \epsilon.
\]
A higher-dimensional simplex $\sigma=\{x_0,\dots,x_k\}$ belongs to $\DR_\epsilon(X,Y)$ if every
pair $\{x_i,x_j\}\subseteq \sigma$ is an edge. The distinction is therefore between requiring one
common witness for all vertices ($\Dow_\epsilon$) and requiring only pairwise witnesses
($\DR_\epsilon$). In particular, a simplex may belong to $\DR_\epsilon(X,Y)$ even if its common
witness set is empty:
\[
\sigma\in \DR_\epsilon(X,Y)
\quad\text{but}\quad
W_{R_\epsilon}(\sigma)=\varnothing.
\]
\end{definition}

This loss of higher-order witness information is central for defining Hodge stars.

\begin{definition}[Flagification and \texorpdfstring{$k$}{k}-flagification]
A simplicial complex $K$ is flag if every clique in its one-skeleton spans a simplex. The
flagification of $K$ is
\[
\Flag(K)=\mathrm{CliqueComplex}(K^{(1)}),
\]
where $K^{(1)}$ is the one-skeleton of $K$. More generally, $k$-flagification fills a simplex
whenever all of its $(k-1)$-dimensional faces are already present; ordinary flagification
corresponds to $k=2$. This creates a tradeoff: smaller $k$ is cheaper to compute but fills more
aggressively, whereas larger $k$ retains more higher-order information at greater computational
cost. For Dowker--Rips, ordinary flagification gives a complex determined by the one-skeleton, but
the loss of higher-order witness sets may be significant for Hodge-theoretic information.
\end{definition}

\section{Weighted Hodge Laplacians and Geometry-Induced Stars}
\label{sec:laplacian}
\label{sec:constructions}

\begin{definition}[Weighted discrete Hodge Laplacian]
Let $K$ be a finite simplicial complex and let $C^k(K)$ denote the space of $k$-cochains. Equip
$C^k(K)$ with the weighted inner product
$\langle \alpha,\beta\rangle_{k,W}=\alpha^T W_k\beta$, where
$W_k=\mathrm{diag}\{w_k(\sigma):\sigma\in K_k\}$ is positive diagonal. If
$d_k:C^k(K)\to C^{k+1}(K)$ is the coboundary operator, its weighted adjoint is
$\delta_{k+1}^{W}=W_k^{-1}d_k^T W_{k+1}$. The weighted Hodge Laplacian
$\Delta_k^W=\delta_{k+1}^{W}d_k+d_{k-1}\delta_k^W$ is, in coordinates,
\[
\Delta_k^W
=
W_k^{-1}d_k^T W_{k+1}d_k
+
d_{k-1}W_{k-1}^{-1}d_{k-1}^T W_k.
\]
The matrices $d_k$ encode the combinatorial topology of $K$, while the matrices $W_k$ encode
metric or geometric information. The main design choice is therefore the construction of $W_k$.
\end{definition}

\begin{remark}[Chain representative used in the implementation]
The theoretical results of Section~\ref{sec:theory} are stated for the weighted cochain Laplacian
$\Delta_k^W$, which is self-adjoint with respect to $\langle\cdot,\cdot\rangle_{k,W}$ but not
symmetric in the standard basis. The implementation works instead with signed boundary matrices
$\partial_k:C_k(K)\to C_{k-1}(K)$ (so that $d_{k-1}=\partial_k^T$) and assembles the analogous
weighted chain Laplacian induced by weighted inner products on chain spaces:
\[
L_k^W
=
W_k^{-1}\partial_k^T W_{k-1}\partial_k
+
\partial_{k+1}W_{k+1}^{-1}\partial_{k+1}^T W_k,
\]
which satisfies the same positivity and Betti-number preservation properties. For numerical
diagonalization, the implementation uses the symmetric representative
\[
\widehat{\Delta}_k^W
=
W_k^{-1/2}\partial_k^T W_{k-1}\partial_k W_k^{-1/2}
+
W_k^{1/2}\partial_{k+1}W_{k+1}^{-1}\partial_{k+1}^T W_k^{1/2}
=
W_k^{1/2}\,L_k^W\,W_k^{-1/2}.
\]
Because $\widehat{\Delta}_k^W$ is a similarity transform of $L_k^W$ by the positive diagonal
$W_k^{1/2}$, it is a real symmetric positive semidefinite matrix with the same spectrum as
$L_k^W$; in particular its eigenvalues, kernel dimension, and all derived spectral descriptors
coincide with those of the chain operator. The cochain form $\Delta_k^W$ and the chain form $L_k^W$
are related by the chain--cochain identification and by the placement of weights on neighbouring
degrees. All spectra reported below are computed from $\widehat{\Delta}_k^W$.
\end{remark}

For an alpha complex, the Hodge star can be defined using primal and dual volumes,
\[
(\star_k)_{\sigma\sigma}
=
\frac{|\star\sigma|}{|\sigma|}.
\]
For a Rips or Dowker--Rips complex, there is generally no canonical dual cell $\star\sigma$. We
therefore use the following diagonal substitute.

\begin{definition}[Geometry-induced diagonal Hodge star]
A geometry-induced diagonal Hodge star is a positive diagonal mass matrix
$W_k=\mathrm{diag}\{w_k(\sigma)\}$ whose entries are written
\[
(\star_k^W)_{\sigma\sigma}
=
w_k(\sigma),
\]
where $w_k(\sigma)>0$ is chosen using information available from the filtration, the metric, the
witness relation, or the local combinatorics. As in Section~\ref{sec:intro}, this is a weighted
inner product on $C^k(K)$, not a dimension-shifting map.
\end{definition}

The weighted Laplacian construction works for any positive diagonal matrices $W_k$. The substantive
choice is therefore the design of $w_k(\sigma)$. We use the identity star as a baseline, a
birth-scale star as a filtration baseline, and two geometry-aware stars that are implemented and
emphasized in the computations: simplex-volume weights and soft witness weights. Among these, the
soft witness star is the main Dowker--Rips construction, because it assigns a graded support value
to simplices created by flagification even when they have no exact common witness.

\begin{remark}[Cochain metrics beyond dual cells]
The constructions below should be read as choices of cochain metric rather than as attempts to
reconstruct a unique missing DEC dual mesh. In a finite-dimensional cochain space, any symmetric
positive definite matrix $M_k$ defines an inner product
\[
\langle\alpha,\beta\rangle_{k,M}
=
\alpha^T M_k\beta,
\]
and hence an adjoint $d_k^{*,M}=M_k^{-1}d_k^T M_{k+1}$ and a Hodge Laplacian. A diagonal DEC
Hodge star on a circumcentric mesh is one special case, where
$M_k(\sigma,\sigma)=|\star\sigma|/|\sigma|$. For Rips-type complexes we instead take
$M_k=W_k$ diagonal and design the entries from available data: filtration scale, Euclidean simplex
volume, or witness support. This keeps the Hodge-theoretic principle---geometry enters through the
cochain metric---without imposing a primal--dual cell interpretation where none is canonical.
\end{remark}

\begin{definition}[Identity Hodge star]
The simplest choice is $w_k(\sigma)=1$, i.e.\ $W_k=I$. This yields the standard combinatorial
Hodge Laplacian; it provides a baseline but ignores metric and witness information.
\end{definition}

\begin{definition}[Birth-scale Hodge star]
For a Rips simplex $\sigma$, define its birth value by
$b(\{x\})=0$ for vertices and, for $|\sigma|\geq2$,
$b(\sigma)=\max_{x_i,x_j\in\sigma} d(x_i,x_j)$, the filtration value at which $\sigma$ enters the
Vietoris--Rips filtration. A birth-scale Hodge star is
$w_k^{\mathrm{birth}}(\sigma)=\phi(b(\sigma))$, where $\phi:[0,\infty)\to(0,\infty)$ is positive,
for example $\phi(b)=b+\eta$, $\phi(b)=1/(b+\eta)$, or
$\phi(b)=\exp(-b^2/t)$. This is filtration-compatible because it uses the same birth information
that defines the Rips filtration.
\end{definition}

\begin{definition}[Simplex-volume Hodge star]
If the point cloud lies in Euclidean space, each simplex can be assigned a geometric volume. Here
$|\sigma|_k$ denotes the $k$-dimensional Euclidean volume of $\sigma$, with $|\sigma|_0=1$ for
vertices; the implementation uses edge length for $1$-simplices, Heron's formula for triangles, and
the Cayley--Menger determinant in higher dimension. Then
\[
w_k^{\mathrm{vol}}(\sigma)
=
|\sigma|_k+\eta
\qquad\text{or}\qquad
w_k^{\mathrm{invvol}}(\sigma)
=
\frac{1}{|\sigma|_k+\eta}.
\]
This injects geometric information into the Rips complex; nearly degenerate simplices have very
small volume, so regularization is necessary.
\end{definition}

\begin{definition}[Witness-supported Hodge star]
For Dowker and Dowker--Rips complexes, a natural replacement for a dual object is the witness set
$W_R(\sigma)$. When an ambient Euclidean geometry is available, a direct witness-supported Hodge
star is
\[
w_k^{\mathrm{wit}}(\sigma)
=
\frac{\mu(W_R(\sigma))+\eta}{|\sigma|_k+\delta},
\]
where $\mu$ is a measure or counting measure on $Y$ and $\eta,\delta>0$ are regularization
parameters. This mimics the DEC formula $(\star_k)_{\sigma\sigma}=|\star\sigma|/|\sigma|$, with
$\mu(W_R(\sigma))$ playing the role of the dual volume. For a purely relational Dowker complex, the
denominator can be replaced by $1$ or by another prescribed positive simplex-size function. For
ordinary Dowker complexes every simplex has a nonempty witness set, but flagification can introduce
simplices with empty witness sets, motivating a soft witness construction.
\end{definition}

\begin{definition}[Soft witness Hodge star]
Suppose $X,Y\subseteq (Z,d)$. Define the soft witness support and the corresponding weight by
\[
s_t(\sigma;Y)=\sum_{y\in Y}\exp\!\left(-\frac{1}{t}\sum_{x\in\sigma} d(x,y)^2\right),
\qquad
w_k^{\mathrm{soft}}(\sigma)=\frac{s_t(\sigma;Y)+\eta}{|\sigma|_k+\delta}.
\]
This is always positive and measures how strongly $\sigma$ is supported by the witness space $Y$.
A simplex inserted only by flagification, with no true common witness, may still receive a small
but nonzero weight. This is the central construction for Hodge theory on Dowker--Rips complexes:
Dowker--Rips gives efficient topology, while soft witness weights quantify higher-order witness
support lost under flagification. The denominator uses the Euclidean simplex volume, with
regularization, in the current implementation.
\end{definition}

\begin{remark}[Implemented specializations]
The implementation accompanying this paper computes two geometry-aware Hodge stars explicitly:
the simplex-volume Hodge star and the soft witness Hodge star. For the volume star, simplex
volumes are computed from the point coordinates using edge lengths, triangle areas, and the
Cayley--Menger determinant in higher dimension, with the regularized weights
\[
w_k^{\mathrm{vol}}(\sigma)=|\sigma|_k+\eta
\qquad\text{or}\qquad
w_k^{\mathrm{invvol}}(\sigma)=\frac{1}{|\sigma|_k+\eta}.
\]
For the soft witness star on a Rips complex, the default computational choice is the
self-witness set $Y=X$, so that
\[
s_t^{X}(\sigma)
=
\sum_{x_j\in X}
\exp\!\left(
-\frac{1}{t}
\sum_{x_i\in\sigma}d(x_i,x_j)^2
\right),
\qquad
w_k^{\mathrm{soft}}(\sigma)
=
\frac{s_t^{X}(\sigma)+\eta}{|\sigma|_k+\delta}.
\]
Thus the current computations compare ordinary combinatorial spectra with spectra obtained from
simplex-volume weights and self-witness soft Dowker weights. The formulas are not restricted to
planar data: the point cloud may lie in an ambient Euclidean space of arbitrary coordinate
dimension, and the implemented Rips construction includes tetrahedra when the maximal simplex
dimension is set to $3$.
\end{remark}

\section{Theoretical Properties}
\label{sec:theory}

Throughout, self-adjointness is taken with respect to the weighted inner product. Thus an operator
$T:C^k(K)\to C^k(K)$ is self-adjoint if
\[
\langle T\alpha,\beta\rangle_{k,W}
=
\langle\alpha,T\beta\rangle_{k,W}
\qquad
\text{for all }\alpha,\beta\in C^k(K),
\]
equivalently if $W_kT$ is symmetric. Note that $\Delta_k^W$ is generally not symmetric in the
standard basis, so this qualifier is essential.

\begin{proposition}[Asymptotic soft-witness decay rate]
\label{prop:witness-separation}
Let $X,Y$ be finite metric spaces in a common ambient metric space. Define
$s_t(\sigma;Y)=\sum_{y\in Y}\exp(-t^{-1}\sum_{x\in\sigma}d(x,y)^2)$, set
$D(\sigma,y)=\sum_{x\in\sigma}d(x,y)^2$, and let
$D^{*}(\sigma)=\min_{y\in Y}D(\sigma,y)$.
Then
\[
\lim_{t\to0^{+}}\bigl(-t\log s_t(\sigma;Y)\bigr)=D^{*}(\sigma).
\]
Moreover, for the metric relation $R_\epsilon=\{(x,y):d(x,y)\le\epsilon\}$:
\begin{enumerate}[leftmargin=2em,label=(\roman*)]
\item if $\sigma\in\Dow_\epsilon(X,Y)$, then $D^{*}(\sigma)\le \#\sigma\,\epsilon^2$;
\item if $\sigma\in\DR_\epsilon(X,Y)\setminus\Dow_\epsilon(X,Y)$, then
$D^{*}(\sigma)>\epsilon^2$.
\end{enumerate}
Thus the soft witness support has a precise small-bandwidth decay rate, and flagified-only
simplices have decay rate bounded below by $\epsilon^2$.
\end{proposition}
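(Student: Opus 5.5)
The limit is a Laplace-type (log-sum-exp) estimate for a finite sum, and I will prove it by sandwiching. Write $N=\#Y$, which is finite and at least $1$ as long as $Y\neq\varnothing$; I assume this tacitly, since otherwise $s_t\equiv0$ and $D^*$ is undefined. Fix $\sigma$ and let $y^*\in Y$ attain the minimum, so $D(\sigma,y^*)=D^*(\sigma)$. Every term of $s_t$ satisfies $e^{-D(\sigma,y)/t}\le e^{-D^*(\sigma)/t}$, and the $y^*$ term equals $e^{-D^*(\sigma)/t}$. Hence
\[
e^{-D^{*}(\sigma)/t}\;\le\; s_t(\sigma;Y)\;\le\; N\,e^{-D^{*}(\sigma)/t}.
\]
Taking logarithms and multiplying by $-t<0$ reverses the inequalities:
\[
D^{*}(\sigma)-t\log N\;\le\;-t\log s_t(\sigma;Y)\;\le\;D^{*}(\sigma).
\]
Since $t\log N\to0$ as $t\to0^+$, the limit equals $D^*(\sigma)$. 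No uniformity is needed, because $Y$ is finite.

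For (i), suppose $\sigma\in\Dow_\epsilon(X,Y)$. Then the witness set $W_{R_\epsilon}(\sigma)$ is nonempty, so we can pick $y$ with $d(x,y)\le\epsilon$ for all $x\in\sigma$. This gives
\[
D^*(\sigma)\le D(\sigma,y)=\sum_{x\in\sigma}d(x,y)^2\le \#\sigma\,\epsilon^2.
\]

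For (ii), suppose $\sigma\in\DR_\epsilon(X,Y)\setminus\Dow_\epsilon(X,Y)$. Then $W_{R_\epsilon}(\sigma)=\varnothing$, so for every $y\in Y$ there is some $x_y\in\sigma$ with $d(x_y,y)>\epsilon$. Since all summands are nonnegative,
\[
D(\sigma,y)\ge d(x_y,y)^2>\epsilon^2 .
\]
To get a strict inequality for the minimum, I use finiteness of $Y$: the minimum is attained at some $y^*$, so $D^*(\sigma)=D(\sigma,y^*)>\epsilon^2$. Membership in $\DR_\epsilon$ plays no role beyond placing $\sigma$ in the complex; only the failure of a common witness is used.

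The only point needing care is the strictness in (ii), and attainment of the minimum over the finite set $Y$ handles it. The closing sentence of the proposition then follows by combining the limit with (ii): the decay exponent of $s_t$ for a flagified-only simplex is $D^*(\sigma)>\epsilon^2$.
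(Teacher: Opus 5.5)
Your proof is correct and follows essentially the same route as the paper: the sandwich $e^{-D^{*}(\sigma)/t}\le s_t(\sigma;Y)\le |Y|\,e^{-D^{*}(\sigma)/t}$ followed by $-t\log(\cdot)$ for the limit, a common witness for (i), and a per-$y$ violating vertex plus attainment of the minimum over finite $Y$ for the strict bound in (ii). Your explicit remark that $Y\neq\varnothing$ is needed is a harmless clarification the paper leaves implicit.
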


\begin{proof}
Since $Y$ is finite, the largest exponential term controls the sum:
\[
e^{-D^{*}(\sigma)/t}\le s_t(\sigma;Y)\le |Y|e^{-D^{*}(\sigma)/t}.
\]
Taking $-t\log(\cdot)$ gives
\[
D^{*}(\sigma)-t\log |Y|\le -t\log s_t(\sigma;Y)\le D^{*}(\sigma),
\]
and the limit follows. If $\sigma\in\Dow_\epsilon(X,Y)$, there exists a common witness $y_0$
with $d(x,y_0)\le\epsilon$ for every $x\in\sigma$, so
$D^{*}(\sigma)\le D(\sigma,y_0)\le \#\sigma\,\epsilon^2$. If
$\sigma\in\DR_\epsilon(X,Y)\setminus\Dow_\epsilon(X,Y)$, then no $y\in Y$ witnesses all vertices.
For each $y$, some $x\in\sigma$ satisfies $d(x,y)>\epsilon$, hence
$D(\sigma,y)>\epsilon^2$. Taking the minimum over finite $Y$ gives
$D^{*}(\sigma)>\epsilon^2$.
\end{proof}

\begin{remark}
Proposition~\ref{prop:witness-separation} is an asymptotic statement, not a pointwise guarantee
that every flagified-only simplex has smaller $s_t$ than every true Dowker simplex at a fixed
$t$. The rate ranges can overlap for higher-dimensional simplices. The proposition nevertheless
gives a concrete quantity to test experimentally: whether the decay-rate distinction changes the
nonzero spectrum of the weighted Hodge Laplacian.
\end{remark}

\begin{lemma}[Weighted adjoint]
\label{lem:adjoint}
With respect to the weighted inner products $\langle\alpha,\beta\rangle_{k,W}=\alpha^TW_k\beta$, the
adjoint of the coboundary $d_k:C^k(K)\to C^{k+1}(K)$ is $d_k^{*}=\delta_{k+1}^{W}=W_k^{-1}d_k^{T}W_{k+1}$.
\end{lemma}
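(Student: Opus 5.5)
The plan is to verify the defining identity of the adjoint directly in coordinates. We must show that for all $\alpha\in C^k(K)$ and $\beta\in C^{k+1}(K)$,
\[
\langle d_k\alpha,\beta\rangle_{k+1,W}=\langle \alpha,\delta_{k+1}^W\beta\rangle_{k,W}.
\]
Uniqueness of adjoints will then identify $\delta_{k+1}^W$ as $d_k^{*}$. The only input is that each $W_j$ is a positive diagonal matrix. Hence $W_j$ is symmetric and invertible, and $\langle\cdot,\cdot\rangle_{j,W}$ is a genuine inner product (positive definite) on the finite-dimensional space $C^j(K)$.

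The computation itself is a single chain of equalities. On the left,
\[
\langle d_k\alpha,\beta\rangle_{k+1,W}=(d_k\alpha)^TW_{k+1}\beta=\alpha^T d_k^TW_{k+1}\beta .
\]
On the right,
\[
\langle \alpha, W_k^{-1}d_k^TW_{k+1}\beta\rangle_{k,W}=\alpha^TW_kW_k^{-1}d_k^TW_{k+1}\beta=\alpha^Td_k^TW_{k+1}\beta .
\]
The two sides agree, so the adjoint identity holds.

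For uniqueness, suppose $A$ is another operator satisfying the same identity. Then $\langle\alpha,(A-\delta_{k+1}^W)\beta\rangle_{k,W}=0$ for all $\alpha$. Taking $\alpha=(A-\delta_{k+1}^W)\beta$ and using positive definiteness forces $A=\delta_{k+1}^W$.

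There is no real obstacle here. The only point needing care is bookkeeping: the weight on the target degree, $W_{k+1}$, appears inside, while the inverse of the weight on the source degree, $W_k^{-1}$, appears outside. It also matters that $W_k^{-1}$ exists, which uses the strict positivity $w_k(\sigma)>0$ built into the geometry-induced diagonal Hodge star. I would also remark that the same argument works verbatim for any symmetric positive definite $M_k$, matching the earlier remark on cochain metrics beyond dual cells.
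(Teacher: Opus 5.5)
Your proposal is correct and matches the paper's proof: both rest on the same coordinate identity $(d_k\alpha)^TW_{k+1}\beta=\alpha^TW_k\bigl(W_k^{-1}d_k^TW_{k+1}\bigr)\beta$. Your explicit uniqueness argument, taking $\alpha=(A-\delta_{k+1}^W)\beta$ and using positive definiteness, spells out what the paper compresses into ``as $W_k$ is invertible,'' namely nondegeneracy of the weighted inner product.
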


\begin{proof}
For all $\alpha\in C^k(K)$, $\beta\in C^{k+1}(K)$,
\[
\langle d_k\alpha,\beta\rangle_{k+1,W}
=(d_k\alpha)^T W_{k+1}\beta
=\alpha^T W_k\bigl(W_k^{-1}d_k^T W_{k+1}\bigr)\beta
=\langle\alpha,\delta_{k+1}^{W}\beta\rangle_{k,W}.
\]
As $W_k$ is invertible, $\delta_{k+1}^{W}$ is the unique adjoint of $d_k$.
\end{proof}

\begin{proposition}[Positivity and self-adjointness]
\label{prop:selfadjoint}
Let $W_{k-1},W_k,W_{k+1}$ be positive definite diagonal matrices. Then the weighted Hodge Laplacian
$\Delta_k^W=\delta_{k+1}^{W}d_k+d_{k-1}\delta_k^{W}$ is self-adjoint and positive semidefinite with
respect to $\langle\cdot,\cdot\rangle_{k,W}$.
\end{proposition}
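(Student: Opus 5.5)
The plan is to reduce everything to Lemma~\ref{lem:adjoint} by writing $\Delta_k^W$ as a sum of two terms of the form $A^{*}A$. By Lemma~\ref{lem:adjoint}, applied in degree $k$ and in degree $k-1$, the operator $\delta_{k+1}^W=W_k^{-1}d_k^TW_{k+1}$ is the weighted adjoint $d_k^{*}$ of $d_k:C^k(K)\to C^{k+1}(K)$. Likewise $\delta_k^W=W_{k-1}^{-1}d_{k-1}^TW_k$ is the weighted adjoint $d_{k-1}^{*}$ of $d_{k-1}:C^{k-1}(K)\to C^k(K)$. Hence
\[
\Delta_k^W=d_k^{*}d_k+d_{k-1}d_{k-1}^{*}.
\]
The only hypotheses used are that $W_{k-1},W_k,W_{k+1}$ are positive diagonal, hence invertible. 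This is exactly what Lemma~\ref{lem:adjoint} needs, and it is also what makes each $\langle\cdot,\cdot\rangle_{j,W}$ a genuine inner product.

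\emph{Self-adjointness.} For $\alpha,\beta\in C^k(K)$, apply the adjoint identity twice to each summand:
\[
\langle d_k^{*}d_k\alpha,\beta\rangle_{k,W}=\langle d_k\alpha,d_k\beta\rangle_{k+1,W}=\langle\alpha,d_k^{*}d_k\beta\rangle_{k,W}.
\]
The same two-step computation, with $d_{k-1}^{*}$ in place of $d_k$, gives
\[
\langle d_{k-1}d_{k-1}^{*}\alpha,\beta\rangle_{k,W}=\langle d_{k-1}^{*}\alpha,d_{k-1}^{*}\beta\rangle_{k-1,W}.
\]
Summing the two identities shows that $\Delta_k^W$ is self-adjoint. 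As a cross-check in coordinates, one can verify that
\[
W_k\Delta_k^W=d_k^TW_{k+1}d_k+W_kd_{k-1}W_{k-1}^{-1}d_{k-1}^TW_k
\]
is manifestly symmetric, since each $W_j$ is symmetric. This matches the equivalent criterion stated at the start of Section~\ref{sec:theory}.

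\emph{Positivity.} Setting $\beta=\alpha$ in the two identities above gives
\[
\langle\Delta_k^W\alpha,\alpha\rangle_{k,W}=\|d_k\alpha\|_{k+1,W}^2+\|d_{k-1}^{*}\alpha\|_{k-1,W}^2\ge0.
\]
Here nonnegativity uses positive definiteness of $W_{k+1}$ and $W_{k-1}$. This proves positive semidefiniteness.

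There is no substantive obstacle; the proof is bookkeeping. The main care point is keeping track of which weight matrix governs which degree when invoking Lemma~\ref{lem:adjoint} for $d_{k-1}$, which is a shift of index by one. The boundary cases also need a convention. For $k=0$, set $d_{-1}=0$, so the second term vanishes. For $k$ equal to the top dimension, set $d_k=0$, so the first term vanishes. In both cases the argument goes through unchanged.
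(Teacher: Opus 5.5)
Your proof is correct and follows essentially the same route as the paper. Both use Lemma~\ref{lem:adjoint} to rewrite $\Delta_k^W=d_k^{*}d_k+d_{k-1}d_{k-1}^{*}$, then apply the $A^{*}A$ argument to $A=d_k$ and $A=d_{k-1}^{*}$, obtaining self-adjointness and $\langle\Delta_k^W\alpha,\alpha\rangle_{k,W}=\|d_k\alpha\|_{k+1,W}^2+\|d_{k-1}^{*}\alpha\|_{k-1,W}^2\ge0$; your coordinate check that $W_k\Delta_k^W$ is symmetric and your boundary conventions for $k=0$ and the top degree are harmless additions not in the paper.
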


\begin{proof}
By Lemma~\ref{lem:adjoint}, $\delta_{k+1}^{W}=d_k^{*}$ and $\delta_k^{W}=d_{k-1}^{*}$, so
$\Delta_k^W=d_k^{*}d_k+d_{k-1}d_{k-1}^{*}$. For any linear map $A$ with adjoint $A^{*}$, the operator
$A^{*}A$ is self-adjoint, since $\langle A^{*}A\alpha,\beta\rangle=\langle A\alpha,A\beta\rangle
=\langle\alpha,A^{*}A\beta\rangle$, and positive semidefinite, since
$\langle A^{*}A\alpha,\alpha\rangle=\|A\alpha\|_W^2\ge0$. Applying this to $A=d_k$ and $A=d_{k-1}^{*}$
shows each summand, and hence their sum, is self-adjoint and positive semidefinite.
\end{proof}

\begin{proposition}[Kernel dimension and Betti-number preservation]
\label{prop:homology}
Let $W_{k-1},W_k,W_{k+1}$ be positive definite diagonal matrices. Then
\[
\dim\ker\Delta_k^W=\dim H^k(K;\R)=\beta_k(K),
\]
independently of the choice of positive weights: the weights do not change the kernel dimension,
only the nonzero spectrum.
\end{proposition}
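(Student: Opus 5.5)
The plan is to prove the standard weighted Hodge decomposition with respect to $\langle\cdot,\cdot\rangle_{k,W}$, and then read off the kernel.

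\textbf{Step 1: harmonic characterization of the kernel.} By Proposition~\ref{prop:selfadjoint} and Lemma~\ref{lem:adjoint}, $\Delta_k^W=d_k^{*}d_k+d_{k-1}d_{k-1}^{*}$. For $\alpha\in C^k(K)$ this gives
\[
\langle\Delta_k^W\alpha,\alpha\rangle_{k,W}=\|d_k\alpha\|_{W}^2+\|d_{k-1}^{*}\alpha\|_{W}^2 ,
\]
where each norm is taken in the relevant degree. Because each $W_j$ is positive definite, these norms are genuine norms. Hence $\Delta_k^W\alpha=0$ if and only if $d_k\alpha=0$ and $d_{k-1}^{*}\alpha=0$. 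The forward implication uses the displayed identity. The converse is immediate from the formula for $\Delta_k^W$.

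\textbf{Step 2: identify the kernel with cohomology.} We have $\ker d_{k-1}^{*}=(\operatorname{im} d_{k-1})^{\perp_W}$, the orthogonal complement in the weighted inner product. This is the usual fact that the kernel of an adjoint is the orthogonal complement of the image, and it follows directly from the defining identity in Lemma~\ref{lem:adjoint}. So
\[
\ker\Delta_k^W=\ker d_k\cap(\operatorname{im} d_{k-1})^{\perp_W}.
\]
Since $d_k d_{k-1}=0$, we have $\operatorname{im} d_{k-1}\subseteq\ker d_k$. Within the finite-dimensional inner product space $\ker d_k$, equipped with the restricted inner product, we get the orthogonal splitting
\[
\ker d_k=\operatorname{im} d_{k-1}\oplus\bigl(\ker d_k\cap(\operatorname{im} d_{k-1})^{\perp_W}\bigr).
\]
Consequently the quotient map $\ker d_k\to H^k(K;\R)$ restricts to an isomorphism on $\ker\Delta_k^W$. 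This yields $\dim\ker\Delta_k^W=\dim\ker d_k-\operatorname{rank} d_{k-1}=\dim H^k(K;\R)$. Finally, $\dim H^k(K;\R)=\beta_k(K)$ by the universal coefficient theorem over the field $\R$, or simply by the definition of Betti numbers.

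\textbf{Step 3: independence of the weights.} The right-hand side $\dim\ker d_k-\operatorname{rank}d_{k-1}$ involves only the coboundary matrices, so it does not depend on $W$. Only the nonzero eigenvalues can change with $W$. I would also note that $\Delta_k^W$ is diagonalizable with real nonnegative eigenvalues, because it is self-adjoint for $\langle\cdot,\cdot\rangle_{k,W}$. Its algebraic and geometric multiplicities at $0$ therefore agree, so the spectral statement is unambiguous.

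\textbf{Expected obstacle.} The only delicate point is the step "$\langle\Delta\alpha,\alpha\rangle=0\Rightarrow$ both terms vanish". It needs positivity of all three weight matrices $W_{k-1},W_k,W_{k+1}$, so that the weighted norms are definite. Everything else is linear algebra. If desired, the same argument transfers to the chain form $L_k^W$ and to the symmetric representative $\widehat{\Delta}_k^W$ via the similarity noted in the earlier remark.
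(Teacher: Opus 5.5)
Your proof is correct and follows essentially the paper's argument. Both use the positivity identity to characterize $\ker\Delta_k^W$ as $\ker d_k\cap(\operatorname{im}d_{k-1})^{\perp_W}$ and then identify this space with $H^k(K;\R)$ by an orthogonal splitting. The one minor difference is that you split only $\ker d_k$, whereas the paper quotes the full three-term weighted Hodge decomposition of $C^k(K)$ without proving it, so your version is slightly more self-contained.
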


\begin{proof}
Adjoints are taken in the weighted inner products (Lemma~\ref{lem:adjoint}). Since $\Delta_k^W
=d_k^{*}d_k+d_{k-1}d_{k-1}^{*}$ is positive semidefinite (Proposition~\ref{prop:selfadjoint}),
\[
\begin{aligned}
\Delta_k^W\alpha=0
&\iff
\langle\Delta_k^W\alpha,\alpha\rangle_{k,W}=0\\
&\iff
\|d_k\alpha\|_{k+1,W}^2+\|\delta_k^{W}\alpha\|_{k-1,W}^2=0\\
&\iff
d_k\alpha=0\ \text{and}\ \delta_k^{W}\alpha=0,
\end{aligned}
\]
so $\ker\Delta_k^W=\ker d_k\cap(\operatorname{im}d_{k-1})^{\perp_W}$ is the space of harmonic
cochains. The weighted orthogonal (Hodge) decomposition
\[
C^k(K)=\operatorname{im}d_{k-1}\oplus\ker\Delta_k^W\oplus\operatorname{im}\delta_{k+1}^{W}
\]
then gives $\ker\Delta_k^W\cong\ker d_k/\operatorname{im}d_{k-1}=H^k(K;\R)$. Over the field
$\R$, this dimension equals the homological Betti number $\dim H_k(K;\R)$. Thus the kernel
dimension is independent of the positive weights.
\end{proof}

\begin{remark}
Proposition~\ref{prop:homology} is the weighted discrete Hodge theorem: harmonic cochains represent
cohomology for any inner product, so all positive Hodge stars preserve Betti numbers. The geometric
content of the weights lives entirely in the nonzero part of the spectrum.
\end{remark}

\section{Spectral Descriptors and Implementation}
\label{sec:descriptors}

For each filtration scale $\epsilon$, let $K_\epsilon$ be the corresponding Rips-type complex and
construct a weighted discrete Hodge Laplacian $\Delta_k^W(\epsilon)$ on $k$-cochains. Its spectrum
is written
\[
0=\lambda_1^{(k)}(\epsilon)\leq \lambda_2^{(k)}(\epsilon)\leq \cdots.
\]
By Proposition~\ref{prop:homology}, the kernel dimension
\[
\beta_k(\epsilon)=\dim\ker \Delta_k^W(\epsilon)
\]
is independent of the choice of positive diagonal weights and agrees with the ordinary Betti number
of $K_\epsilon$. The nonzero eigenvalues, however, depend on the chosen Hodge star and provide
geometry-dependent information at that scale, including the first nonzero eigenvalues
$\lambda_{\beta_k+1}^{(k)}(\epsilon),\dots$ and spectral gaps
$\lambda_{j+1}^{(k)}(\epsilon)-\lambda_j^{(k)}(\epsilon)$. We also record heat-type summaries such
as the heat trace
$\mathrm{Tr}(e^{-s\Delta_k^W(\epsilon)})=\sum_j e^{-s\lambda_j^{(k)}(\epsilon)}$ and the spectral
entropy $-\sum_j p_j\log p_j$, where
$p_j=e^{-s\lambda_j}/\sum_\ell e^{-s\lambda_\ell}$.

Across the filtration, the per-scale eigenvalues and heat summaries define persistent spectral
curves
$\epsilon\mapsto\lambda_j^{(k)}(\epsilon)$ and
$\epsilon\mapsto\mathrm{Tr}(e^{-s\Delta_k^W(\epsilon)})$, and similarly for spectral entropy.
These curves may distinguish datasets whose persistent homology is similar but whose metric or
witness geometry differs.

The terminology ``persistent spectral curve'' is used here in a deliberately limited sense. The
weights $W_k(\epsilon)$ do not alter the simplicial filtration $\{K_\epsilon\}$ and therefore do
not change the ordinary persistent homology module. The inclusion maps
$K_{\epsilon_i}\subseteq K_{\epsilon_j}$ and the induced maps
$H_k(K_{\epsilon_i};\mathbb R)\to H_k(K_{\epsilon_j};\mathbb R)$ are independent of the positive
diagonal weights. What changes with the choice of Hodge star is the nonzero spectrum of the
per-scale operator $\Delta_k^W(\epsilon)$. Thus the proposed descriptors should be viewed as
geometry-aware spectral summaries along a filtration, complementary to persistent homology, rather
than as a persistent-pair Laplacian theory.

This differs from weighted persistent homology, where weights modify the filtration or persistence
computation itself. It also differs from persistent Laplacians for simplicial pairs, which use a
pair $K_{\epsilon_i}\subseteq K_{\epsilon_j}$ and whose kernel dimension equals the corresponding
persistent Betti number. Our operators are single-scale weighted Hodge Laplacians on $K_\epsilon$:
their kernels give ordinary Betti numbers at that scale, while their nonzero spectra provide
additional geometry-aware features.

The accompanying implementation follows this pipeline. It constructs signed real boundary matrices
from the ordered simplices, assembles the symmetric representative of the weighted Hodge Laplacian,
and computes spectra by dense self-adjoint eigensolvers for the small complexes considered here.
The implemented Hodge-star choices are $W_k=I$, birth-scale weights, simplex-volume and
inverse-volume weights, and self-witness soft Dowker weights. The validation tests check
$\partial_k\partial_{k+1}=0$, symmetry and positive semidefiniteness of the Laplacians, and
$\dim\ker\Delta_k^W=\beta_k$ for all positive Hodge stars. Additional tests compare spectral
descriptors for point clouds with identical $H_0$ persistence but different $L_0$ spectra, and for
identity, volume, and soft-witness stars on the same Rips complex. The implementation also exercises
three-dimensional input and tetrahedral complexes: with four points in $\mathbb{R}^3$ and
maximal simplex dimension $3$, the Rips complex contains all $15$ simplices of a tetrahedron, and
the code computes both $L_2$ spectra, where tetrahedra contribute through the upper boundary
operator, and the top-dimensional $L_3$ spectrum.

\begin{remark}[Parameters and cost]
The regularizers $\eta,\delta>0$ keep the diagonal weights positive and prevent degenerate
simplices from producing singular volume factors. The bandwidth $t$ controls how local the soft
witness support is: small $t$ emphasizes the best witnesses through the decay rate in
Proposition~\ref{prop:witness-separation}, while larger $t$ averages over more of $Y$. For a
complex $K$ with witness set $Y$, the soft witness weights cost
$O(|K|\,|Y|\,d)$ in an ambient $d$-dimensional Euclidean space. The current implementation uses
dense eigensolvers, with cubic cost in the number of $k$-simplices, which is appropriate for the
small examples below but not for large-scale computation.
\end{remark}

\section{Experiments}
\label{sec:experiments}

We report four validation examples, each tied to one of the claims above. The experiments are
deliberately small and reproducible: they test that positive diagonal Hodge-star weights preserve
Betti numbers, that changing the cochain metric changes the nonzero spectrum, and that soft witness
weights record support information lost under flagification. Persistent homology is used only as a
baseline comparison in the first example. As in Section~\ref{sec:descriptors}, the persistent
objects computed here are per-scale spectral descriptors, not two-parameter persistent-pair
Laplacians.

\begin{example}[Persistent baseline: same barcode, different spectrum]
The first example compares two four-point clouds whose minimum-spanning-tree edge lengths are all
equal to one. Their $H_0$ persistence barcodes are therefore identical: three finite bars die at
scale $1$ and one bar persists. Explicitly,
\[
\begin{aligned}
X_{\mathrm{path}}&=\{(0,0),(1,0),(2,0),(3,0)\},\\
X_{\mathrm{star}}&=\{(0,0),(1,0),(-\tfrac12,\tfrac{\sqrt3}{2}),
(-\tfrac12,-\tfrac{\sqrt3}{2})\}.
\end{aligned}
\]
\begin{center}
\begin{tikzpicture}[
    scale=0.85,
    point/.style={circle,fill=black,inner sep=1.8pt},
    edge/.style={line width=0.8pt},
    every node/.style={font=\small}
]
    \node at (1.5,0.75) {$X_{\mathrm{path}}$};
    \draw[edge] (0,0)--(1,0)--(2,0)--(3,0);
    \foreach \x in {0,1,2,3} {
        \node[point] at (\x,0) {};
    }

    \begin{scope}[xshift=6cm]
        \node at (0,1.25) {$X_{\mathrm{star}}$};
        \coordinate (c) at (0,0);
        \coordinate (a) at (1,0);
        \coordinate (b) at (-0.5,0.866);
        \coordinate (d) at (-0.5,-0.866);
        \draw[edge] (c)--(a) (c)--(b) (c)--(d);
        \foreach \p in {c,a,b,d} {
            \node[point] at (\p) {};
        }
    \end{scope}
\end{tikzpicture}
\end{center}
At the Rips scale $\epsilon=1.05$ with maximal simplex dimension $2$, each complex has $7$ simplices
($4$ vertices, $3$ edges, no triangle), so $\beta_0=1$ and $\beta_1=0$; one graph is a path and the
other is a three-legged star. Using the identity star $W_k=I$ at dimension $k=0$, the $L_0$ spectra
distinguish them:
\[
\mathrm{Spec}(L_0^{\mathrm{path}})
=
\{0,0.586,2,3.414\},
\qquad
\mathrm{Spec}(L_0^{\mathrm{star}})
=
\{0,1,1,4\}.
\]
Thus the same $H_0$ persistence barcode does not imply the same Hodge spectrum. This is the
baseline motivation for adding spectral descriptors to ordinary persistent homology.
\end{example}

\begin{example}[Fixed complex, different Hodge stars]
The second example isolates the effect of the Hodge star by fixing both the data and the simplicial
complex. Let
$X=\{(1.5\cos\theta_j,\,0.7\sin\theta_j):\theta_j=2\pi j/6,\ j=0,\dots,5\}$, and take the Rips scale
$\epsilon=1.7$ with maximal simplex dimension $2$. The complex has $16$ simplices ($6$ vertices,
$8$ edges, $2$ triangles) with $\beta_0=1$ and $\beta_1=1$.
\begin{center}
\begin{tikzpicture}[
    scale=1.25,
    point/.style={circle,fill=black,inner sep=1.7pt},
    edge/.style={line width=0.75pt},
    face/.style={fill=gray!15,draw=none},
    every node/.style={font=\small}
]
    \coordinate (v0) at (1.5,0);
    \coordinate (v1) at (0.75,0.606);
    \coordinate (v2) at (-0.75,0.606);
    \coordinate (v3) at (-1.5,0);
    \coordinate (v4) at (-0.75,-0.606);
    \coordinate (v5) at (0.75,-0.606);

    \fill[face] (v0)--(v1)--(v5)--cycle;
    \fill[face] (v2)--(v3)--(v4)--cycle;
    \draw[edge] (v0)--(v1)--(v2)--(v3)--(v4)--(v5)--(v0);
    \draw[edge] (v1)--(v5);
    \draw[edge] (v2)--(v4);

    \foreach \p in {v0,v1,v2,v3,v4,v5} {
        \node[point] at (\p) {};
    }
    \node[anchor=west] at (v0) {$x_0$};
    \node[anchor=south west] at (v1) {$x_1$};
    \node[anchor=south east] at (v2) {$x_2$};
    \node[anchor=east] at (v3) {$x_3$};
    \node[anchor=north east] at (v4) {$x_4$};
    \node[anchor=north west] at (v5) {$x_5$};
\end{tikzpicture}
\end{center}
We compute spectra for the same complex
using the identity star, the simplex-volume star, and the self-witness soft star (bandwidth
$t=1.0$, regularizers $\eta=10^{-8}$, $\delta=10^{-6}$). The kernel dimension agrees across all
positive stars, as predicted by Proposition~\ref{prop:homology}, while the nonzero eigenvalues move;
the corresponding $L_0$ and $L_1$ scatter plots appear in Figure~\ref{fig:weighted-stars}. The
$L_0$ spectra are
\[
\begin{array}{c|c}
\text{star} & L_0\text{ spectrum}\\
\hline
\text{identity} & 0,\ 1,\ 3,\ 3,\ 4,\ 5\\
\text{volume} & 0,\ 0.748,\ 2.687,\ 3.111,\ 3.696,\ 4.020\\
\text{soft witness} & 0,\ 3.590,\ 5.865,\ 8.797,\ 25.680,\ 32.201
\end{array}
\]
The first
nonzero $L_0$ eigenvalue moves from $1.00$
(identity) to $0.748$ (volume) to $3.59$ (soft witness). This is the basic empirical behavior needed
for geometry-aware descriptors: topology is preserved in the kernel, and geometry appears in the
positive spectrum.
\end{example}

\begin{figure}[t]
\centering
\includegraphics[width=0.88\textwidth]{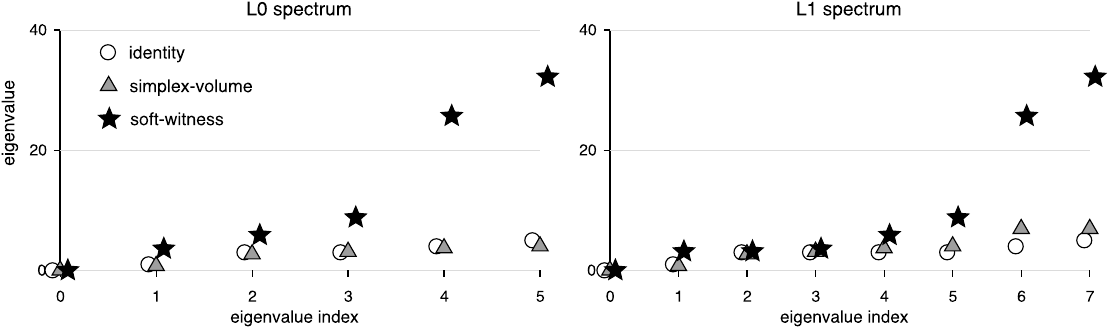}
\caption{Scatter plots of the $L_0$ and $L_1$ spectra for one Rips complex with identity,
simplex-volume, and soft-witness stars. The Betti numbers are unchanged, but the nonzero spectra
are reshaped by the choice of Hodge star.}
\label{fig:weighted-stars}
\end{figure}

\begin{example}[Soft witness support under flagification]
The third example tests the soft witness construction from Definition~3.9 and the decay-rate
interpretation in Proposition~\ref{prop:witness-separation}. Use the relation
\[
\begin{pmatrix}
1&0&1\\
1&1&0\\
0&1&1
\end{pmatrix}.
\]
The ordinary Dowker complex contains the three edges but no triangle, while the Dowker--Rips
flagification inserts the triangle. This relation is realized metrically by taking the landmarks to
be the vertices of a unit equilateral triangle,
$X=\{(0,0),(1,0),(\tfrac12,\tfrac{\sqrt3}{2})\}$, and the witnesses to be the three edge midpoints,
$Y=\{(\tfrac12,0),(\tfrac34,\tfrac{\sqrt3}{4}),(\tfrac14,\tfrac{\sqrt3}{4})\}$, under $R_\epsilon$
with $\epsilon=0.51$. All distances $d(x,y)$ are Euclidean, $D(\sigma,y)=\sum_{x\in\sigma}d(x,y)^2$,
and $D^*(\sigma)=\min_{y\in Y}D(\sigma,y)$; the denominator uses the Euclidean simplex size (edge
length or triangle area), with regularizers $\eta=\delta=10^{-8}$. The supported Dowker cycle and
the flagified simplex are shown below.
\begin{center}
\begin{tikzpicture}[
    scale=1.35,
    point/.style={circle,fill=black,inner sep=1.6pt},
    edge/.style={line width=0.75pt},
    support/.style={line width=0.45pt,gray!60},
    face/.style={fill=gray!14,draw=none},
    every node/.style={font=\small}
]
    \coordinate (a0) at (0,0);
    \coordinate (a1) at (1,0);
    \coordinate (a2) at (0.5,0.866);
    \coordinate (b0) at (2.8,0);
    \coordinate (b1) at (3.8,0);
    \coordinate (b2) at (3.3,0.866);

    \node at (0.5,1.24) {Dowker support};
    \draw[edge] (a0)--(a1)--(a2)--(a0);
    \draw[support] (0.5,0)--(a0) (0.5,0)--(a1);
    \draw[support] (0.75,0.433)--(a1) (0.75,0.433)--(a2);
    \draw[support] (0.25,0.433)--(a0) (0.25,0.433)--(a2);
    \foreach \p/\lab/\pos in {a0/$x_0$/below left,a1/$x_1$/below right,a2/$x_2$/above} {
        \node[point] at (\p) {};
        \node[\pos] at (\p) {\lab};
    }
    \foreach \x/\y/\lab/\pos in {0.5/0/$y_0$/below,0.75/0.433/$y_1$/right,0.25/0.433/$y_2$/left} {
        \node[inner sep=0pt] at (\x,\y) {$\star$};
        \node[\pos] at (\x,\y) {\lab};
    }

    \node at (3.3,1.24) {flagified simplex};
    \fill[face] (b0)--(b1)--(b2)--cycle;
    \draw[edge] (b0)--(b1)--(b2)--(b0);
    \foreach \p/\lab/\pos in {b0/$x_0$/below left,b1/$x_1$/below right,b2/$x_2$/above} {
        \node[point] at (\p) {};
        \node[\pos] at (\p) {\lab};
    }
\end{tikzpicture}
\end{center}

\begin{center}
\normalfont
\refstepcounter{table}\label{tab:witness-separation}
\begin{tabular}{lccccc}
\hline
simplex & common witnesses & $D^*$ & $-t\log s_t$ & $s_t$ & $w^{\mathrm{soft}}$\\
\hline
$\{0,1\}$ & $1$ & $0.5$ & $0.470$ & $9.56\cdot 10^{-2}$ & $9.56\cdot 10^{-2}$\\
$\{0,1,2\}$ & $0$ & $1.25$ & $1.030$ & $5.79\cdot 10^{-3}$ & $1.34\cdot 10^{-2}$\\
\hline
\end{tabular}
\par\vspace{0.5em}
\textsc{Table~\thetable.} Numerical witness separation for a true Dowker edge and a
flagified-only triangle.
\end{center}
This yields $D^*(\{0,1\})=0.5$ (attained at the midpoint of edge $\{0,1\}$) and
$D^*(\{0,1,2\})=1.25$. With
$t=0.2$, the true Dowker edge $\{0,1\}$ and the flagified-only triangle $\{0,1,2\}$ have the values
in Table~\ref{tab:witness-separation}. The triangle has no common witness and has larger $D^*$,
smaller soft support, and much smaller witness weight. This is the intended numerical role of
$s_t(\sigma;Y)$: it assigns graded support to flagified simplices without treating them as genuinely
witnessed Dowker simplices.
\end{example}

\begin{remark}[Spectral effect of witness separation]
The same example also shows the spectral effect of changing from the ordinary Dowker complex to its
flagified Dowker--Rips complex. For the Dowker complex, the binary witness star gives eigenvalues
approximately $\{0,6,6\}$, so $\beta_1=1$. After flagification, the inserted triangle kills this
class and the binary witness spectrum becomes approximately
$\{6,6,1.30\cdot10^8\}$; the large eigenvalue reflects the tiny regularized witness weight of the
unsupported triangle. The soft star gives a less singular flagified spectrum,
approximately $\{18.73,18.73,21.43\}$.
\end{remark}

\begin{example}[Tetrahedral three-dimensional validation]
The fourth example checks that the implemented construction is not limited to planar point clouds or
two-dimensional complexes. Take the four points
\[
X=\{(0,0,0),(1,0,0),(0,1,0),(0,0,1)\}\subset\mathbb{R}^3
\]
and build the Vietoris--Rips complex at $\epsilon=1.5$ with maximal simplex dimension $3$. This
includes all $15$ simplices of the tetrahedron: $4$ vertices, $6$ edges, $4$ triangular faces, and
one tetrahedral $3$-simplex. The simplex is shown below.
\begin{center}
\begin{tikzpicture}[
    scale=1.15,
    point/.style={circle,fill=black,inner sep=1.7pt},
    edge/.style={line width=0.75pt},
    hidden/.style={line width=0.55pt,dashed,gray!65},
    face/.style={fill=gray!13,draw=none},
    every node/.style={font=\small}
]
    \coordinate (x0) at (-1.0,-0.55);
    \coordinate (x1) at (1.15,-0.55);
    \coordinate (x2) at (0.35,1.0);
    \coordinate (x3) at (0.05,0.05);

    \fill[face] (x0)--(x1)--(x2)--cycle;
    \fill[gray!20,opacity=0.45] (x0)--(x1)--(x3)--cycle;
    \fill[gray!22,opacity=0.38] (x1)--(x2)--(x3)--cycle;
    \fill[gray!16,opacity=0.38] (x2)--(x0)--(x3)--cycle;

    \draw[edge] (x0)--(x1)--(x2)--(x0);
    \draw[edge] (x0)--(x3) (x1)--(x3) (x2)--(x3);

    \foreach \p/\lab/\pos in {x0/$x_0$/below left,x1/$x_1$/below right,x2/$x_2$/above,x3/$x_3$/right} {
        \node[point] at (\p) {};
        \node[\pos] at (\p) {\lab};
    }
\end{tikzpicture}
\end{center}

The command-line pipeline computes the simplex-volume and soft-witness Hodge spectra on
$2$-cochains, so the tetrahedron contributes through the upper boundary term
\[
L_2^W
=
W_2^{-1}B_2^TW_1B_2 + B_3W_3^{-1}B_3^TW_2,
\]
and it also computes the top-dimensional spectrum on $3$-cochains, where only the down term is
present.

For the simplex-volume star, the $L_2$ eigenvalues are approximately
\[
\{7.727,\ 8.828,\ 8.828,\ 14.196\},
\]
with $\beta_2=0$. The self-witness soft star gives
\[
\{3.740,\ 5.806,\ 5.806,\ 8.064\},
\]
again with $\beta_2=0$. The volume-weighted $L_3$ spectrum consists of the single eigenvalue
$14.196$, with $\beta_3=0$. This verifies the intended three-dimensional path: ambient
$\mathbb{R}^3$ input, tetrahedral Rips construction, Cayley--Menger volume weights, soft witness
weights, and weighted Hodge spectra in dimensions $2$ and $3$.
\end{example}

\section{Conclusion}
\label{sec:conclusion}

Flagification makes Dowker and Rips-type complexes computationally efficient, but it may destroy
higher-order witness or metric information. For Hodge theory, one should therefore ask not only
which simplices are present, but also what cochain metric should measure their geometric or witness
support. The DEC volume-ratio formula suggests one useful design pattern,
\[
(\star_k)_{\sigma\sigma}
=
\frac{\text{surrogate dual support of }\sigma}{\text{primal size of }\sigma},
\]
but this quotient should be understood as one interpretable cochain-metric design rather than as a
requirement that a literal dual cell exist. For Dowker--Rips complexes, the support term can be
supplied by witness sets, soft witness measures, or birth scales. The simplex-volume and soft
witness constructions studied here make this principle concrete: the former injects Euclidean
simplex geometry into the Hodge star, while the latter reintroduces information about higher-order
witness support that may be lost under flagification. More general future choices could use sparse
non-diagonal mass matrices, kernel metrics, density-aware weights, or learned positive definite
cochain metrics. Another natural next step is to study how geometry-induced Hodge stars vary across
filtrations and whether the resulting spectral curves satisfy stability or interleaving-type
estimates. Such a theory would require compatibility of the weighted cochain metrics under
inclusions $K_{\epsilon_i}\subseteq K_{\epsilon_j}$ and is beyond the fixed-scale framework
developed here.

\section*{Acknowledgments}
This work was supported in part by NSF grant DMS-2514195 and by a University of Arkansas Honors
College research grant.

\providecommand{\bysame}{\leavevmode\hbox to3em{\hrulefill}\thinspace}
\providecommand{\MR}{\relax\ifhmode\unskip\space\fi MR }
\providecommand{\MRhref}[2]{%
  \href{http://www.ams.org/mathscinet-getitem?mr=#1}{#2}
}
\providecommand{\href}[2]{#2}


\begin{thebibliography}{10}

\bibitem{battiloro2023weighted}
Claudio Battiloro, Stefania Sardellitti, Sergio Barbarossa, and Paolo
  Di~Lorenzo, \emph{Topological signal processing over weighted simplicial
  complexes}, 2023, arXiv:2302.08561.

\bibitem{chowdhury2018functorial}
Samir Chowdhury and Facundo M{\'e}moli, \emph{A functorial {D}owker theorem and
  persistent homology of asymmetric networks}, Journal of Applied and
  Computational Topology \textbf{2} (2018), no.~1--2, 115--175.

\bibitem{desbrun2005dec}
Mathieu Desbrun, Anil~N. Hirani, Melvin Leok, and Jerrold~E. Marsden,
  \emph{Discrete exterior calculus}, 2005, arXiv:math/0508341.

\bibitem{dowker1952homology}
C.~H. Dowker, \emph{Homology groups of relations}, Annals of Mathematics
  \textbf{56} (1952), no.~1, 84--95.

\bibitem{ebli2020simplicial}
Stefania Ebli, Micha{\"e}l Defferrard, and Gard Spreemann, \emph{Simplicial
  neural networks}, 2020, arXiv:2010.03633.

\bibitem{edelsbrunner2010computational}
Herbert Edelsbrunner and John~L. Harer, \emph{Computational topology: An
  introduction}, American Mathematical Society, 2010.

\bibitem{hirani2003dec}
Anil~N. Hirani, \emph{Discrete exterior calculus}, Ph.D. thesis, California
  Institute of Technology, 2003.

\bibitem{horak2013spectra}
Danijela Horak and J{\"u}rgen Jost, \emph{Spectra of combinatorial {L}aplace
  operators on simplicial complexes}, Advances in Mathematics \textbf{244}
  (2013), 303--336.

\bibitem{huber2025flagifying}
Marius Huber and Patrick Schnider, \emph{Flagifying the {D}owker complex},
  2025, arXiv:2508.08025.

\bibitem{lim2020hodge}
Lek-Heng Lim, \emph{Hodge {L}aplacians on graphs}, SIAM Review \textbf{62}
  (2020), no.~3, 685--715.

\bibitem{memoli2020persistent}
Facundo M{\'e}moli, Zhengchao Wan, and Yusu Wang, \emph{Persistent
  {L}aplacians: properties, algorithms and implications}, 2020,
  arXiv:2012.02808.

\bibitem{meng2019weighted}
Zhenyu Meng, D.~Vijay Anand, Yunpeng Lu, Jie Wu, and Kelin Xia, \emph{Weighted
  persistent homology for biomolecular data analysis}, 2019, arXiv:1903.02890.

\bibitem{ren2018weighted}
Shiquan Ren, Chengyuan Wu, and Jie Wu, \emph{Weighted persistent homology},
  Rocky Mountain Journal of Mathematics \textbf{48} (2018), no.~8, 2661--2687.

\bibitem{rosenberg1997laplacian}
Steven Rosenberg, \emph{The {L}aplacian on a {R}iemannian manifold}, Cambridge
  University Press, 1997.

\bibitem{schaub2018random}
Michael~T. Schaub, Austin~R. Benson, Paul Horn, Gabor Lippner, and Ali
  Jadbabaie, \emph{Random walks on simplicial complexes and the normalized
  {H}odge 1-{L}aplacian}, 2018, arXiv:1807.05044.

\bibitem{smirnov2021hodgenet}
Dmitriy Smirnov and Justin Solomon, \emph{{HodgeNet}: Learning spectral
  geometry on triangle meshes}, 2021, arXiv:2104.12826.

\bibitem{wang2020persistent}
Rui Wang, Duc~Duy Nguyen, and Guo-Wei Wei, \emph{Persistent spectral graph},
  International Journal for Numerical Methods in Biomedical Engineering
  \textbf{36} (2020), no.~9, e3376.

\bibitem{warner1983foundations}
Frank~W. Warner, \emph{Foundations of differentiable manifolds and lie groups},
  Graduate Texts in Mathematics, vol.~94, Springer, 1983.

\bibitem{yang2023convolutional}
Maosheng Yang and Elvin Isufi, \emph{Convolutional learning on simplicial
  complexes}, 2023, arXiv:2301.11163.

\bibitem{yang2022simplicialfilters}
Maosheng Yang, Elvin Isufi, Michael~T. Schaub, and Geert Leus, \emph{Simplicial
  convolutional filters}, 2022, arXiv:2201.11720.

\end{thebibliography}
\end{document}